\title{Slicing and dicing polytopes}
\author{Patrik Nor\'en}
\begin{document}
\newtheorem{theorem}{Theorem}[section]
\newtheorem{definition}[theorem]{Definition}
\newtheorem{lemma}[theorem]{Lemma}
\newtheorem{proposition}[theorem]{Proposition}
\newtheorem{corollary}[theorem]{Corollary}
\newtheorem{example}[theorem]{Example}
\maketitle

\begin{abstract}
Using tropical convexity Dochtermann, Fink, and Sanyal proved that regular fine mixed subdivisions of Minkowski sums of simplices support minimal cellular resolutions. They asked if the regularity condition can be removed. We give an affirmative answer by a different method. A new easily checked sufficient condition for a subdivided polytope to support a cellular resolution is proved. The main tool used is discrete Morse theory.
\end{abstract}

\section{Introduction}

Subdivisions of polytopes often support cellular resolutions. Dochtermann, Joswig, and Sanyal~\cite{DJS} showed that regular mixed subdivisions of $n\Delta_k$ supports minimal cellular resolutions. Dochtermann, Fink, and Sanyal~\cite{DFS} recently generalized this, by showing that regular mixed subdivisions of Minkowski sums of simplices support minimal cellular resolutions. They asked if the regularity condition can be removed.

We show that the condition can be removed in the fine mixed subdivision case by introducing the concepts of diced and sharp polytopes. The main result, Theorem~\ref{thm:maint}, is that subdivisions of a diced polytope into sharp cells always give cellular resolutions. It is immediate that a Minkowski sum of simplices is diced, and that cells in the fine mixed subdivision are sharp.

A less developed version of the geometric methods of this paper was used by Engstr\"om and Nor\'en~\cite{EN} to reveal the fine structure of Betti numbers of powers of ideals in some classes. Based on that data Engstr\"om made a conjecture~\cite{E} that was proved by Mayes-Tang~\cite {M}. Erman and Sam~\cite{ES} give a good survey of the area, which contains open problems and questions that could be attacked with the methods of this paper. The limits of algebraic discrete Morse theory in a related setup to this paper was studied by Nor\'en in~\cite{N}.

In Section~\ref{Cell} the basics of cellular resolutions and discrete Morse theory are reviewed, and in Section~\ref{Poly} the concepts of diced and sharp polytopes are introduced. Finally in Section~\ref{Main}, it is showed that a subdivisions of a diced polytope into sharp cells give a cellular resolution.

\section{Cellular resolutions and Morse theory}\label{Cell}

The theory of cellular resolutions was introduced by Bayer and Sturmfels~\cite{BS}. It provides a way to obtain free resolutions of monomial ideals from cell complexes.

The cell complexes can be assumed to be CW-complexes, and cells are closed unless otherwise specified. The set of vertices, that is, the  zero dimensional cells of a cell complex $X$, is denoted by $V(X)$. This notation is also used for polytopes and cells in general. The vertex set of a polytope $P$ is $V(P)$ and of a cell $\sigma$ it is $V(\sigma)$.

\begin{definition}
A \emph{labeled cell complex} is a cell complex $X$ together with a map $\ell$ from the set of cells of $X$ to the monic monomials in $\mathbb{K}[x_1,\ldots,x_n]$. The map $\ell$ has to satisfy $\ell(\sigma)=\mathrm{lcm}\{\ell(v)\mid v\in V(\sigma)\}$ for all cells $\sigma\in X$.
\end{definition}

\begin{example}\label{ex:simp}
If the vertices of the standard simplex $\Delta_n=\mathrm{conv}\{\mathbf{e}_i\mid i\in [n]\}$ are labeled by $\ell(\mathbf{e}_i)=x_i$, then the complex becomes labeled and the label of the face $\mathrm{conv}\{\mathbf{e}_i\mid i\in S\}$ is $\prod_{i\in S}x_i$.
\end{example}

\begin{definition}
A labeled cell complex $X$ is a \emph{cellular resolution} of the ideal $I=\langle\ell(v)\mid v\in V(X)\rangle$ if the non-empty complexes $\{\sigma\in X\mid \ell(\sigma)$ divides $m\}$ are acyclic over $\mathbb{K}$ for all monomials $m$.
\end{definition}

\begin{definition}
A cellular resolution is \emph{minimal} if no cell is properly contained in a cell with the same label.
\end{definition}

This definition of minimality implies that the complex obtained from the cell complex is minimal in the algebraic sense of free resolutions. See Remark 1.4 in~\cite{BS} for details.

\begin{example}
The standard simplex $\Delta_n=\mathrm{conv}\{\mathbf{e}_i\mid i\in [n]\}$ labeled in Example~\ref{ex:simp} is a cellular resolution. The complex 
\[\left\{\sigma\in X\mid \ell(\sigma)\textrm{ divides }\prod_{i\in S}x_i\right\}\] is the simplex $\mathrm{conv}\{\mathbf{e}_i\mid i\in S\}$, and it is convex and acyclic. The resolution is minimal as each face has a unique label.
\end{example}

Algebraic discrete Morse theory was developed by Batzies and Welker~\cite{BW}. It provides a way to make non-minimal cellular resolutions smaller. Discrete Morse theory is usually explained in terms of Morse functions, but in the algebraic setting it is more convenient to use acyclic matchings. A good introduction to the general theory of discrete Morse theory is by Forman~\cite{F}, who invented it.

\begin{definition}
A matching $M$ in a directed acyclic graph $D$ is \emph{acyclic} if the directed graph obtained from $D$ by reversing the edges in $M$ is acyclic. An acyclic matching of a poset $F$ is an acyclic matching of its Hasse diagram.
\end{definition}

\begin{definition}
An acyclic matching $M$ of the face poset of a labeled cell complex is \emph{homogeneous} if $\ell(\sigma)=\ell(\tau)$ for any $\sigma,\tau\in M$.
\end{definition}

\begin{definition}
Let $M$ be a matching of the face poset of a labeled cell complex $X$. The cells that are not matched are \emph{critical}.
\end{definition}

The main theorem of algebraic discrete Morse theory for cellular resolutions can be stated as follows.

\begin{theorem}\label{thm:admt}
Let $X$ together with the labeling $\ell$ be a cellular resolution of $I$, and let $M$ be an acyclic homogenous matching of the face poset of $X$. Then there is a cell complex $\tilde{X},$ homotopy equivalent to $X$, whose cells are in bijection with the critical cells of $X$. Moreover, the bijection preserve dimensions of cells, and $\tilde{X}$ with the labeling induced by $\ell$ is a cellular resolution of $I$.
\end{theorem}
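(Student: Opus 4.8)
The plan is to decouple the purely topological content from the algebraic bookkeeping. The topological content---the existence of $\tilde{X}$, the homotopy equivalence $X\simeq\tilde{X}$, and the dimension-preserving bijection between cells of $\tilde{X}$ and critical cells of $X$---is exactly the conclusion of Forman's discrete Morse theory for CW complexes~\cite{F}. An acyclic matching on the face poset, in the sense of the definition above, is precisely a discrete gradient vector field: the matched pairs are the non-critical arrows, and the condition that reversing the matched edges leaves the Hasse diagram acyclic is Forman's acyclicity of gradient paths. First I would invoke this to produce $\tilde{X}$ and record that each cell of $\tilde{X}$ corresponds to a critical cell of the same dimension.

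The work then is to install the labeling and verify the resolution property. I would define the induced labeling $\tilde{\ell}(\tilde{\sigma})=\ell(\sigma)$, where $\sigma$ is the critical cell matched to $\tilde{\sigma}$ under the bijection. The key leverage from homogeneity is that the matching restricts to every sublevel complex. Fix a monomial $m$ and set $X_{\le m}=\{\sigma\in X\mid \ell(\sigma)\text{ divides }m\}$. If $\sigma,\tau$ are matched then $\ell(\sigma)=\ell(\tau)$, so $\ell(\sigma)\mid m$ if and only if $\ell(\tau)\mid m$; hence $M$ cuts down to an acyclic matching $M_m$ on $X_{\le m}$ whose critical cells are exactly the critical cells of $X$ lying in $X_{\le m}$.

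The central step is to identify the sublevel complexes of $\tilde{X}$ with the Morse complexes of the sublevel complexes of $X$. Applying Forman's theorem to $X_{\le m}$ with $M_m$ yields a complex homotopy equivalent to $X_{\le m}$ whose cells are the critical cells of $X$ in $X_{\le m}$; by construction these are precisely the cells $\tilde{\sigma}$ with $\tilde{\ell}(\tilde{\sigma})\mid m$, that is, the subcomplex $\{\tilde{\sigma}\in\tilde{X}\mid \tilde{\ell}(\tilde{\sigma})\text{ divides }m\}$. Since $X$ is a cellular resolution, every nonempty $X_{\le m}$ is acyclic, so each such subcomplex of $\tilde{X}$, being homotopy equivalent to $X_{\le m}$, is acyclic as well. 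This is exactly the acyclicity required for $\tilde{X}$ to be a cellular resolution, and it remains only to check that the ideal is unchanged: a non-critical vertex is matched to an edge of equal label, forcing some other vertex's label to divide its own, so its label is a redundant generator, while acyclicity and finiteness of the matching guarantee that for each minimal generator at least one carrying vertex stays critical.

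I expect the main obstacle to be the compatibility across all monomials $m$ simultaneously: one must ensure that a single complex $\tilde{X}$ realizes, for every $m$, the Morse reduction of the corresponding sublevel complex, rather than producing a separate collapse for each $m$. The honest way to secure this is to perform the collapse at the level of the multigraded cellular chain complex $C_\bullet(X)$ following Batzies and Welker~\cite{BW}, where the acyclic homogeneous matching produces a homotopy-equivalent multigraded complex indexed by the critical cells; homotopy equivalence preserves homology in each multidegree, which simultaneously delivers the acyclicity of all sublevel complexes, and the topological realization of this algebraic Morse complex then packages everything as the CW complex $\tilde{X}$.
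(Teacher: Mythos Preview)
The paper does not prove this theorem at all; it states it and immediately defers to the appendix of Batzies and Welker~\cite{BW}. Your sketch therefore goes well beyond what the paper offers, and its overall shape is faithful to the Batzies--Welker argument: homogeneity makes the matching restrict to each sublevel complex $X_{\le m}$, and one then wants the Morse reduction to commute with passage to sublevel complexes. You correctly flag the one genuine subtlety---that Forman's theorem applied separately to each $X_{\le m}$ does not a priori produce the \emph{subcomplex} $\tilde X_{\le m}$ of a single global $\tilde X$---and you correctly resolve it by passing to the multigraded chain complex, which is exactly how~\cite{BW} proceed. One point you gloss over is that $\tilde X$ must again be a \emph{labeled} cell complex in the sense of the definition, i.e.\ $\tilde\ell(\tilde\sigma)=\mathrm{lcm}\{\tilde\ell(v)\mid v\in V(\tilde\sigma)\}$; this follows once one checks that labels are monotone along gradient paths (downward Hasse edges weakly decrease the label, reversed matched edges preserve it), so every vertex of $\tilde\sigma$ has label dividing $\ell(\sigma)$ and some vertex attains it, but it deserves a sentence.
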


The complex $\tilde{X}$ is called the Morse complex of $M$ on $X$. A proof of Theorem~\ref{thm:admt} and other important properties of the Morse complex are in the appendix to~\cite{BW}. The stronger results needed are best explained in terms of gradient paths.

\begin{definition}
Let $M$ be an acyclic matching of the cell complex $X$. A \emph{gradient path} is a directed path in the graph obtained from the Hasse diagram of the face poset of $X$ by reversing the edges in $M$.
\end{definition}

The following is Proposition 7.3 in~\cite{BW}.

\begin{proposition}\label{prop:strong}
Let $M$ be an acyclic matching of the cell complex $X$ and let $\sigma$ and $\tau$ be critical cells. The cell corresponding to $\sigma$ in the Morse complex of $X$ is on the boundary of the cell corresponding to $\tau$ if and only if there is a gradient path from $\sigma$ to $\tau$.
\end{proposition}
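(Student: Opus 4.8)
The plan is to construct $\tilde{X}$ explicitly, with one cell for each critical cell of $X$, and to describe its attaching maps by letting the discrete gradient flow of $M$ act on boundaries; the proposition is then read off directly. Orient every covering edge of the Hasse diagram from the smaller to the larger cell, so that after reversing the matched edges the digraph $D_M$ has its matched pairs pointing down and its unmatched edges pointing up, and gradient paths run from lower to higher critical cells. First I would use acyclicity of $M$ to fix a linear extension of $D_M$ and attach the critical cells in that order. The attaching map of a critical $d$-cell $\tau$ is obtained from its attaching map in $X$ by repeating one rewriting step: whenever the evolving boundary of $\tau$ runs over a $(d-1)$-cell $\sigma'$ that is matched with a $d$-cell $\tau'$, replace the passage over $\sigma'$ by the passage over the remaining facets of $\tau'$. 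Acyclicity forbids this rewriting from cycling, so it terminates with a boundary supported only on critical $(d-1)$-cells, and that terminal boundary is the attaching map of $\tau$ in $\tilde{X}$. The homotopy equivalence and the dimension-preserving bijection are exactly the content of Theorem~\ref{thm:admt}; what remains is to identify the cells in these attaching maps, which is what the proposition records.

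The core observation is that a single rewriting step is exactly one zig-zag of a gradient path. Replacing a matched facet $\sigma'$ by the other facets $\sigma''$ of its partner $\tau'$ records the two-edge segment $\sigma''\to\tau'\to\sigma'$ in $D_M$, namely the unmatched up-edge $\sigma''\to\tau'$ followed by the reversed matched edge $\tau'\to\sigma'$. I would then prove, by induction on the number of rewriting steps performed, that a critical cell $\sigma$ occurs in the evolving boundary of $\tau$ if and only if there is a directed path in $D_M$ from $\sigma$ to $\tau$ built from the zig-zags recorded so far. Each step prepends one zig-zag to such a path, so once the flow stabilizes the cells appearing in the attaching map of $\tau$ are precisely the sources of gradient paths ending at $\tau$. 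This settles both implications simultaneously: a gradient path from $\sigma$ to $\tau$ forces $\sigma$ into the terminal boundary, and conversely each cell of the terminal boundary arises by prepending zig-zags and therefore carries a witnessing gradient path.

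I expect the main obstacle to be making the rewriting step well defined at the level of CW attaching maps rather than merely at the level of incidence numbers, since the cell complexes supporting cellular resolutions need not be regular. The proposition asks which cells lie in the image of an attaching map, so no cancellation may be permitted: if a facet were traversed with opposite orientations its algebraic contributions could cancel even though the cell still lies in the image. I would therefore run the flow on the set of cells swept out by the attaching map rather than on the algebraic boundary chain, and check that passing from $\sigma'$ to the facets of $\tau'$ never deletes a cell already present; termination again follows from acyclicity. The last thing to verify is that this set-theoretic flow agrees, cell by cell, with the homotopy model furnished by Theorem~\ref{thm:admt}. As a cross-check, the algebraic form of the same computation --- the Morse differential written as a sum over gradient paths weighted by products of incidence numbers and their inverses --- is carried out in~\cite{BW}, and it specializes to the geometric statement here once the signs that produce cancellation are forgotten.
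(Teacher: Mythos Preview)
The paper does not supply its own proof of this statement; it is simply quoted as Proposition~7.3 of Batzies and Welker~\cite{BW}, with no argument given. There is therefore nothing in the paper to compare your attempt against.

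That said, your outline is essentially a sketch of the construction carried out in~\cite{BW} (and, ultimately, in Forman~\cite{F}): one builds the Morse complex by letting the attaching map of each critical cell flow across matched pairs until it is supported on critical cells, and then reads off the face relations from which critical cells are swept out. You also correctly isolate the one genuinely delicate point, namely the distinction between the set-theoretic image of the deformed attaching map and the algebraic incidence number obtained by summing signed contributions over gradient paths; the proposition concerns the former, and cancellation in the latter does not remove a cell from the topological boundary. One small gap worth flagging: your rewriting argument is phrased for a critical $\sigma$ of codimension one in $\tau$, while the proposition as stated allows $\sigma$ and $\tau$ of arbitrary dimensions, so a further induction on codimension (or a direct appeal to the topological description of the flow in~\cite{BW}) is needed to cover the general case.
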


In fact the boundary maps can be explicitly described in terms of sums over all gradient paths between the two cells, this is a result by Forman explained in the algebraic setting in Lemma 7.7 in~\cite{BW}. For the matchings in this paper it will turn out that the gradient path between a cell and any of its facets is unique and the resulting complex is regular in the sense of CW-complexes.

\begin{example}\label{ex:sq}
Let $P$ be the square with vertices $(2,0,0),(1,1,0),(1,0,1),$ and $(0,1,1)$. Let $X_P$ be the subdivision of $P$ obtained by cutting with the plane $\{(e_1,e_2,e_3)\in\mathbb{R}^3\mid e_1=1\}$, this complex is depicted in Figure~\ref{fig:subdiv}. The complex $X_P$ turns into a labeled complex by labeling the vertices by $\ell(e_1,e_2,e_3)=x_1^{e_1}x_2^{e_2}x_3^{e_3}$. This labeled complex is a cellular resolution, but it is not minimal. For example the line segment from $(1,1,0)$ to $(1,0,1)$ has the same label as the triangle with vertices $(1,1,0),(1,0,1)$, and $(0,1,1)$. Matching these two cells give a Morse complex isomorphic to the square $P$.
\end{example}

\begin{figure}
\center\includegraphics[width=0.5\textwidth]{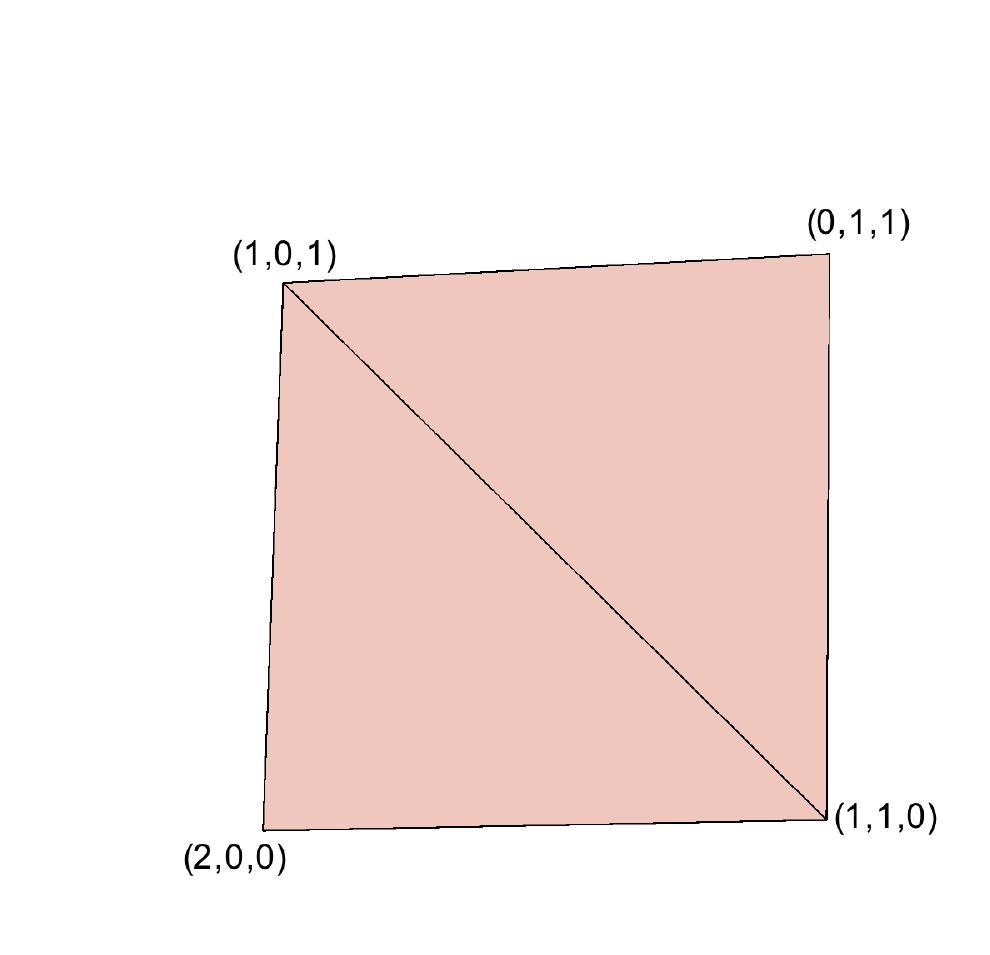}
\caption{The subdivided square in Example~\ref{ex:sq}.}\label{fig:subdiv}
\end{figure}

The following result due to Jonsson, Lemma 4.2 in~\cite{J}, is highly useful when constructing acyclic matchings.

\begin{lemma}\label{lemma:pre}
Let $f$ be a poset map from $P$ to $Q$, and let $M_i$ be an acyclic matching on the preimages $f^{-1}(i)$ for every $i\in Q$. Then the matching $M=\cup_{i\in Q} M_i$ is acyclic.
\end{lemma}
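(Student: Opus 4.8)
The plan is to argue directly from the digraph formulation of acyclicity given in the definition of acyclic matching. First I would fix an orientation convention, orienting the Hasse diagram of $P$ so that each covering edge points from the larger cell down to the smaller one, and writing $D_M$ for the digraph obtained by reversing the edges of $M$. The first thing to record is the trivial but load-bearing observation that every matched pair lies in a single fiber: if $\{\sigma,\tau\}\in M_i$ then $f(\sigma)=f(\tau)=i$. Since the fibers $f^{-1}(i)$ partition $P$, no element is matched twice, so $M=\bigcup_{i\in Q}M_i$ is at least a legitimate matching, and the reversed, upward edges of $D_M$ all preserve the value of $f$.

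Next comes the one step where the hypothesis that $f$ is a \emph{poset map} is genuinely used, and which I expect to be the main subtlety: I would check that each fiber is an order-convex subposet of $P$. If $a,b\in f^{-1}(i)$ and $a<c<b$ in $P$, then $i=f(a)\le f(c)\le f(b)=i$, forcing $f(c)=i$ and $c\in f^{-1}(i)$. The consequence I actually need is that a covering relation inside $f^{-1}(i)$ is also a covering relation in $P$, so the Hasse diagram of the fiber sits inside the Hasse diagram of $P$ and the edges of $M_i$ are genuine edges of $D_M$. Without this one cannot even be sure that $M$ is a matching of the Hasse diagram of $P$, so it is worth stating explicitly rather than passing over.

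The acyclicity itself is then a monotonicity count. Along every directed edge of $D_M$ the value of $f$ is nonincreasing: an unmatched downward edge $\sigma\to\tau$ has $\tau<\sigma$, hence $f(\tau)\le f(\sigma)$ because $f$ is order preserving, while a matched reversed edge keeps $f$ constant by the first observation. Consequently $f$ is weakly decreasing along any directed walk in $D_M$, so around a directed cycle, which returns to its starting vertex, $f$ must be constant. Every such cycle therefore lies entirely inside a single fiber $f^{-1}(i)$.

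Finally I would restrict to that fiber. By order-convexity every edge of the cycle is an edge of the Hasse diagram of $f^{-1}(i)$, and its matched edges belong to $M_i$, so the cycle is a directed cycle of the digraph obtained from the Hasse diagram of $f^{-1}(i)$ by reversing $M_i$. This contradicts the assumed acyclicity of $M_i$. Hence $D_M$ has no directed cycle, and $M$ is acyclic, which is exactly the assertion of the lemma.
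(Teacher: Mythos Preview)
Your argument is correct. The paper does not actually prove this lemma; it merely cites it as Lemma~4.2 of Jonsson's monograph, so there is no in-paper proof to compare against. What you have written is essentially the standard proof: use that $f$ is weakly decreasing along every arc of $D_M$ to trap any putative directed cycle inside a single fiber, then invoke the acyclicity of $M_i$ there.

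One small remark on the logic. Order-convexity of the fibers is needed exactly where you use it, namely to guarantee that a covering relation in $f^{-1}(i)$ is still a covering relation in $P$, so that each $M_i$ really consists of Hasse edges of $P$. In the final step, however, you do not need order-convexity to conclude that the cycle's edges lie in the Hasse diagram of $f^{-1}(i)$: a covering relation in $P$ between two elements of a subposet is automatically a covering relation in that subposet. This is harmless, but you could tighten the wording there.
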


\section{Diced and sharp polytopes}\label{Poly}

This section gives some important definitions regarding polytopes and their subdivisions. In particular we introduce the two new notions of diced and sharp polytopes. Some examples where the subdivisions support cellular resolutions are also provided. Sometimes the subdivisions are trivial in the sense that the only maximal cell is the polytope itself.

\begin{definition}
A polytope $P$ in $\mathbb{R}^n$ is a \emph{lattice polytope} if the vertex set of $P$ is a subset of $\mathbb{Z}^n$.
\end{definition}

\begin{definition}
For a lattice polytope $P$ in $\mathbb{R}^n_{\ge0}$, define the ideal
\[I_P=\langle x_1^{e_1}\cdots x_n^{e_n}\mid (e_1,\ldots,e_n)\in P\cap \mathbb{Z}^n\rangle.\]
\end{definition}

\begin{definition}
Define hyperplanes $H_{i,j}=\{(e_1,\ldots,e_n)\in\mathbb{R}^n\mid e_i=j\}$ and halfspaces $H^{\ge}_{i,j}=\{(e_1,\ldots,e_n)\in\mathbb{R}^n\mid e_i\ge j\}$, $H^{\le}_{i,j}=\{(e_1,\ldots,e_n)\in\mathbb{R}^n\mid e_i\le j\}$ for any integers $i$ and $j$.
\end{definition}

The next definition provides a large class of subdivisions of polytopes that often support cellular resolutions. Later discrete Morse theory will be used to make the resolutions smaller.

\begin{definition}
Let $P$ be a lattice polytope in $\mathbb{R}^n_{\ge0}$. Define $X_P$ to be the subdivision of $P$ obtained by cutting it with the hyperplanes $H_{i,j}$ for all integers $i$ and $j$.
\end{definition}

\begin{definition}
Let $P$ be a lattice polytope in $\mathbb{R}^n_{\ge0}$. Define $O_P$ to be the face poset of cells in $X_P$ that are not contained in the boundary of $P$.
\end{definition}

The maximal cells in $O_P$ are exactly the cells in $X_P$ of the same dimension as $P$.

\begin{example}
Recall the complex $X_P$ in Example~\ref{ex:sq} where $P$ is the square with vertices $(2,0,0),(1,1,0),(1,0,1)$ and $(0,1,1)$.
In this example only one of the hyperplanes was needed to define the complex. The poset $O_P$ consists of two triangles and their common facet.
\end{example}

The following definition is important. Most polytopes will be assumed to satisfy this property.

\begin{definition}
A lattice polytope $P$ in $\mathbb{R}^n_{\ge0}$ is \emph{diced} if $P\cap \mathbb{Z}^n=V(X_P)$.
\end{definition}

\begin{example}\label{ex:lines}
The line from $(2,0)$ to $(0,2)$ is a diced polytope. The vertices of the subdivided complex are $(2,0),(1,1)$ and $(0,2)$. The line from $(0,0)$ to $(1,2)$ is not diced as the vertices of the subdivided complex are $(0,0),(1/2,1)$ and $(1,2)$. The polytopes are depicted in Figure~\ref{fig:lines}.
\end{example}

\begin{figure}
\center\includegraphics[width=\textwidth]{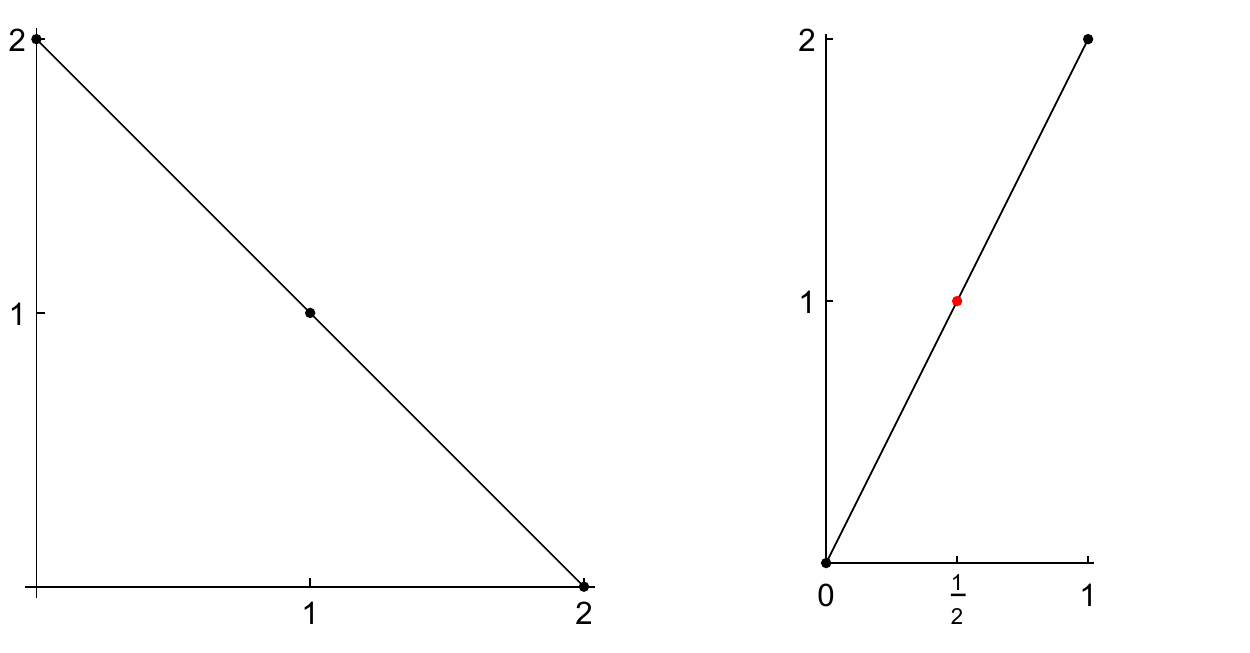}
\caption{To the left is the subdivsion of the diced line from $(2,0)$ to $(0,2)$ in Example~\ref{ex:lines}. To the right is the subdivision of the line from $(0,0)$ to $(1,2)$ with the red vertex $(1/2,1)$ showing that the line is not diced.}\label{fig:lines}
\end{figure}

A rich class of diced polytopes come from totally unimodular matrices.

\begin{definition}
A matrix is \emph{totally unimodular} if all determinats of sub matrices are in $\{-1,0,1\}$. 
\end{definition}

\begin{proposition}\label{prop:uno}
Let $M$ be a totally unimodular matrix in $\mathbb{R}^{m\times n}$ and let $\beta\in\mathbb{Z}^m$. The polytope $P=\{(e_1,\ldots,e_n)\in\mathbb{R}^n_{\ge 0}\mid M\cdot (e_1,\ldots,e_n)^T\le\beta\}$  is diced.
\end{proposition}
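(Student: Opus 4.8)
The plan is to establish the two inclusions $P\cap\mathbb{Z}^n\subseteq V(X_P)$ and $V(X_P)\subseteq P\cap\mathbb{Z}^n$ separately. The first inclusion holds for every lattice polytope and needs no hypothesis on $M$: if $p=(p_1,\ldots,p_n)\in P\cap\mathbb{Z}^n$, then $p$ lies on the $n$ cutting hyperplanes $H_{1,p_1},\ldots,H_{n,p_n}$, whose normals $\mathbf{e}_1,\ldots,\mathbf{e}_n$ are linearly independent and whose common intersection is the single point $p$. Hence $p$ is a $0$-cell of the subdivision $X_P$, so $p\in V(X_P)$. The content of the proposition is therefore entirely in the reverse inclusion, and this is where total unimodularity enters.

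For the reverse inclusion I would first describe the vertices of $X_P$ explicitly. Every $v\in V(X_P)$ is a $0$-dimensional cell, so among the hyperplanes passing through it there are $n$ with linearly independent normals. Each such hyperplane is either a facet hyperplane of $P$ --- of the form $M_k\cdot x=\beta_k$ for a row $M_k$ of $M$, or $x_i=0$ coming from the constraint $\mathbf{e}_i\cdot x\ge 0$ --- or a cutting hyperplane $H_{i,j}$, which is $x_i=j$. Collecting $n$ such independent equations yields a linear system $Av=b$ in which every row of $A$ is either a row of $M$ or a standard basis vector $\mathbf{e}_i^T$, and every entry of $b$ is an integer (some $\beta_k$, some $j$, or $0$). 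Since the $n$ chosen normals are linearly independent, $A$ is an invertible $n\times n$ matrix.

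The key step is to show $\det A=\pm 1$. For this I would use that adjoining unit rows to a totally unimodular matrix preserves total unimodularity, so that $\left(\begin{smallmatrix} M \\ I \end{smallmatrix}\right)$ is totally unimodular whenever $M$ is. This follows by a short induction: expanding the determinant of any square submatrix along a row that is a standard basis vector either gives $0$ (if the corresponding column is absent) or reduces to a smaller minor, which lies in $\{-1,0,1\}$ by hypothesis. Since $A$ is a nonsingular square submatrix of this totally unimodular matrix, its determinant lies in $\{-1,0,1\}\setminus\{0\}=\{-1,1\}$. Consequently $A^{-1}$ has integer entries, so $v=A^{-1}b\in\mathbb{Z}^n$, and as $v\in P$ we conclude $v\in P\cap\mathbb{Z}^n$.

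I expect the main obstacle to be the bookkeeping in the second paragraph: one must argue carefully that a $0$-cell of $X_P$ is genuinely cut out by $n$ hyperplanes with linearly independent normals drawn from exactly this list (rows of $M$, nonnegativity facets $x_i=0$, and integer cuts $x_i=j$), rather than by some other geometric incidence. Once this description is pinned down, the total-unimodularity argument is routine. The one auxiliary fact worth isolating is the preservation lemma for adjoining identity rows, since the nonnegativity facets and the cuts $x_i=j$ all contribute standard-basis rows, and these are precisely the rows supplied by the identity block.
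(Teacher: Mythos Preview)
Your argument is correct and takes a different route from the paper. The paper proceeds by induction on $n$: it first cites the standard fact that TU systems have integral vertices, so every vertex of $X_P$ lies on some coordinate hyperplane $H_{i,j}$; then it observes that the slice $P\cap H_{i,j}$, viewed in the remaining $n-1$ coordinates, is again cut out by a totally unimodular matrix (delete column $i$ of $M$) with integral right-hand side, and invokes the inductive hypothesis. You instead unwind the vertex description in one shot: write $v$ as the unique solution of an $n\times n$ system $Av=b$ whose rows come from $M$ together with standard basis vectors, note that appending $I$ to a TU matrix preserves total unimodularity, and conclude $\det A=\pm 1$, hence $v\in\mathbb{Z}^n$. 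Your approach is essentially the textbook integrality proof for TU polyhedra, applied to the system augmented by the cutting hyperplanes; it is more explicit and self-contained, at the cost of the bookkeeping you correctly flag about which hyperplanes can meet at $v$. The paper's slicing argument is quicker to state but leaves to the reader why a vertex of $X_P$ lying on $H_{i,j}$ is again a vertex of the subdivided slice.
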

\begin{proof}
It is well known that polytopes defined by totally unimodular matrices in this way only have vertices in $\mathbb{Z}^n$. As a consequence if $v$ is a vertex of $X_P$ then there is some hyperplane $H_{i,j}$ containing $v$.

Fixing a coordinate to be a particular integer gives a slice of the polytope. This slice is defined by a smaller still totally unimodular matrix, obtained by deleting a column from $M$. Now the statement follows by induction on $n$.
\end{proof}

\begin{definition}
Let $P$ be a diced polytope. Define $\ell(P)=\mathrm{lcm}\{x_1^{e_1}\cdots x_n^{e_n}\mid (e_1,\ldots,e_n)\in P\cap\mathbb{Z}^n\}$.
\end{definition}

The complex $X_P$ supports a cellular resolution for any diced polytope $P$. Before proving this it is helpful to consider some properties of the complex $X_P$.

A very important property is that all the cells in $X_P$ are lattice polytopes if $P$ is diced. This follows immediately from the definition of diced as all vertices in $X_P$ are in $\mathbb{Z}^n$.

An equivalent way to describe the complex $X_P$, is that $X_P$ is the subdivision of $P$ induced by the subdivision of $\mathbb{R}^n$ into cubes $[e_1-1,e_1]\times\cdots\times[e_n-1,e_n]$ where $(e_1,\ldots,e_n)\in\mathbb{Z}^n$. From this description it follows that each open cell in $X_P$ is contained in a unique half open cube  $(e_1-1,e_1]\times\cdots\times(e_n-1,e_n]$. This is useful when $P$ is diced as the label of a cell can be recovered from the half open cube containing its interior. If the open cell $\sigma$ is contained in $(e_1-1,e_1]\times\cdots\times(e_n-1,e_n]$, then it has to have at least one vertex in each hyperplane $H_{i,e_i}$, as otherwise all vertices would be in some $H_{i,e_i-1}$ and that hyperplane does not intersect the half open cube. This shows that the label of $\sigma$ is $x_1^{e_1}\cdots x_n^{e_n}$.

\begin{proposition}\label{prop:diced}
If $P$ is a diced polytope, then $X_P$ together with $\ell$ is a cellular resolution of $I_P$.
\end{proposition}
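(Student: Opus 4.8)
The plan is to verify directly the two requirements in the definition of a cellular resolution. First I would check that the generators are correct. Since $P$ is diced, $V(X_P) = P \cap \mathbb{Z}^n$, and each vertex $v$ carries the label $x^{v}$, so $\langle \ell(v) \mid v \in V(X_P)\rangle = I_P$ by the definition of $I_P$. The substance of the proposition is then the acyclicity over $\mathbb{K}$ of the subcomplexes $C_m = \{\sigma \in X_P \mid \ell(\sigma) \text{ divides } m\}$ for every monomial $m = x_1^{a_1}\cdots x_n^{a_n}$.

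The key step is to reinterpret divisibility geometrically using the half-open cube description established just before the proposition. If the interior of an open cell $\sigma$ lies in the half-open cube $(e_1-1,e_1]\times\cdots\times(e_n-1,e_n]$, then $\ell(\sigma) = x_1^{e_1}\cdots x_n^{e_n}$ and $e_i = \lceil t_i\rceil$ for every interior point $t$. As each $a_i$ is an integer, the inequality $e_i \le a_i$ is equivalent to $t_i \le a_i$. I would use this to show that $\ell(\sigma)$ divides $m$ if and only if $\sigma$ is contained in the polytope $Q_m = P \cap \bigcap_{i} H^{\le}_{i,a_i}$. The point deserving care is that no cell of $X_P$ crosses a hyperplane $H_{i,a_i}$: because $X_P$ is obtained by cutting with all the $H_{i,j}$ and $a_i \in \mathbb{Z}$, every cell lies in one of the closed halfspaces $H^{\le}_{i,a_i}$ or $H^{\ge}_{i,a_i}$, so the coordinatewise divisibility condition passes cleanly to closed containment of the whole cell.

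With this equivalence in hand, $C_m$ is precisely the set of cells of $X_P$ contained in $Q_m$. This is a subcomplex, since any face of a cell inside $Q_m$ again lies in $Q_m$, and its geometric realization is all of $Q_m$: indeed $Q_m$ is a union of cells of $X_P$ because its bounding hyperplanes $H_{i,a_i}$ are among the cutting hyperplanes. The polytope $Q_m$ is convex and hence contractible, so whenever $C_m$ is non-empty it is contractible, and in particular acyclic over $\mathbb{K}$. This verifies the defining condition and finishes the proof.

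I expect the main obstacle to be the careful bookkeeping in the geometric translation of divisibility — in particular the implication from ``$e_i \le a_i$ holds on the interior of $\sigma$'' to ``$\sigma \subseteq Q_m$'', which hinges on $H_{i,a_i}$ being a cutting hyperplane so that cells never straddle it. Once divisibility is identified with containment in the convex region $Q_m$, the acyclicity is just the contractibility of a convex set.
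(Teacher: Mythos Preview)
Your proposal is correct and follows essentially the same route as the paper: identify the divisibility condition $\ell(\sigma)\mid m$ with the geometric containment $\sigma\subseteq P\cap\bigcap_i H^{\le}_{i,a_i}$, observe that this is a convex set and that $X_P$ subdivides it exactly, and conclude acyclicity from convexity. You are in fact a bit more explicit than the paper about why cells cannot straddle the hyperplanes $H_{i,a_i}$ and about the passage from the open-cell label description to containment of the closed cell, which is all to the good.
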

\begin{proof}
The preceding discussion showed that an open cell in $X_P$ has label  $x_1^{e_1}\cdots x_n^{e_n}$ if and only if it is contained in the half open cube $(e_1-1,e_1]\times\cdots\times(e_n-1,e_n]$. The label of a cell then divides $x_1^{e_1}\cdots x_n^{e_n}$ if and only if it is contained in $H^{\le}_{1,e_1}\cap\cdots\cap H^{\le}_{n,e_n}$. In particular, the geometric realization of
\[\{\sigma\in X_P\mid \ell(\sigma)\textrm{ divides }x_1^{e_1}\cdots x_n^{e_n}\}\]
is $P\cap H^{\le}_{1,e_1}\cap\cdots\cap H^{\le}_{n,e_n}$. It is convex, and in particular acyclic or empty. We have verified that the complex is a cellular resolution. It resolves $I_P$ as $V(X_P)=P\cap\mathbb{Z}^n.$
\end{proof}

A useful property of these resolutions is that the maximal cells in $O_P$ will have different labels as they are contained in different $[e_1-1,e_1]\times\cdots\times[e_n-1,e_n]$  cubes. To reduce the size of cellular resolutions constructed from diced polytopes we introduce the well behaving sharp ones.

\begin{definition}
A diced polytope $P$ is \emph{sharp} if there is a cell $\sigma_P\in X_P$ so that $\dim \sigma_P=\dim P$ and $\ell(\sigma_P)=\ell(P)$.
\end{definition}

Recall that the maximal elements in the poset $O_P$ are the cells with the same dimension as $P$, in particular if $P$ is sharp then it has the maximal element $\sigma_P$.

\begin{definition}
A diced polytope $P$ is \emph{totally sharp} if all faces of $P$ are sharp.
\end{definition}

\begin{example}
The line segment from $(2,0)$ to $(0,2)$ in Figure~\ref{fig:lines} viewed as a polytope is not sharp. The whole line has label $x_1^2x_2^2$, and the two full dimensional cells have labels $x_1^2x_2$ and $x_1x_2^2$. Observe that the full dimensional cells are sharp.
\end{example}

\begin{example}
The square $P$ in Example~\ref{ex:sq} is sharp. The complex $X_P$ is depicted in Figure~\ref{fig:subdiv}, the lower triangle is the cell $\sigma_P$.
\end{example}

There is a more geometric definition of sharpness.

\begin{proposition}
Let $P$ be a diced polytope with $\ell(P)=x_1^{e_1}\cdots x_n^{e_n}$. Then the polytope $P$ is sharp if and only if $\dim P\cap H^{\ge}_{1,e_1-1}\cap\cdots\cap H^{\ge}_{n,e_n-1}=\dim P$. In this case $\sigma_P=P\cap H^{\ge}_{1,e_1-1}\cap\cdots\cap H^{\ge}_{n,e_n-1}$.
\end{proposition}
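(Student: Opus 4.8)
The plan is to reduce the half-open cube description of labels from the discussion preceding Proposition~\ref{prop:diced} to the closed inequalities appearing in the statement, and then to match full-dimensional cells carrying the label $\ell(P)$ against the single closed unit cube $C_e=[e_1-1,e_1]\times\cdots\times[e_n-1,e_n]$ that sits against the top of $P$.

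First I would record the meaning of the exponents $e_i$. Since $\ell(P)$ is defined as an lcm of monomials $x_1^{f_1}\cdots x_n^{f_n}$ over lattice points $(f_1,\ldots,f_n)\in P\cap\mathbb{Z}^n$, taking an lcm reads off the largest exponent in each variable, so $e_i=\max\{f_i\mid (f_1,\ldots,f_n)\in P\cap\mathbb{Z}^n\}$. Because $P$ is a lattice polytope, the linear functional $x_i$ attains its maximum over $P$ at a vertex, and that vertex lies in $\mathbb{Z}^n$; hence $\max_{x\in P}x_i=e_i$ and $P\subseteq H^{\le}_{i,e_i}$ for every $i$. Consequently the upper inequalities are automatically satisfied on all of $P$, and
\[P\cap H^{\ge}_{1,e_1-1}\cap\cdots\cap H^{\ge}_{n,e_n-1}=P\cap\big([e_1-1,e_1]\times\cdots\times[e_n-1,e_n]\big)=P\cap C_e.\]
This identifies the set in the statement as the intersection of $P$ with a single unit cube, which is exactly a closed cell of $X_P$ whenever it is nonempty.

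Next I would invoke the label--cube correspondence. A full-dimensional cell $\sigma$ of $X_P$ has relative interior disjoint from every cutting hyperplane $H_{i,j}$, so it lies in the interior of a unique unit cube; by the discussion before Proposition~\ref{prop:diced} its label is $x_1^{f_1}\cdots x_n^{f_n}$ exactly when that cube is $C_f=[f_1-1,f_1]\times\cdots\times[f_n-1,f_n]$. In particular a full-dimensional cell carries the label $\ell(P)$ if and only if it is contained in $C_e$, equivalently in $P\cap C_e$. With this in hand both implications follow. If $P$ is sharp, the witnessing cell $\sigma_P$ is full-dimensional with label $\ell(P)$, so $\sigma_P\subseteq P\cap C_e$; since $\sigma_P$ is full-dimensional this forces $P\cap C_e$ to be full-dimensional, and as both are maximal cells of $X_P$ the containment is an equality, giving the asserted formula. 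Conversely, if $\dim(P\cap C_e)=\dim P$, then $P\cap C_e$ is a full-dimensional cell of $X_P$ whose relative interior, avoiding the hyperplanes, lies in the interior of $C_e$; hence its label is $\ell(P)$ and it serves as $\sigma_P$, so $P$ is sharp.

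The only genuinely non-routine point is the reduction in the second step: one must observe that the definition of $\ell(P)$ forces $e_i$ to be the coordinatewise maximum over all of $P$ (not merely over some lattice points), so that the closed cube $C_e$ is pinned against the top faces of $P$ and the upper half-space constraints become vacuous. Everything after that is an application of the half-open cube bookkeeping already established for diced polytopes, together with the elementary fact that a full-dimensional cell contained in another full-dimensional cell of the same subdivision must coincide with it.
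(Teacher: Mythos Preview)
Your argument is correct and follows essentially the same route as the paper's: observe that $\ell(P)=x_1^{e_1}\cdots x_n^{e_n}$ forces $P\subseteq H^{\le}_{1,e_1}\cap\cdots\cap H^{\le}_{n,e_n}$, identify the set in the statement with $P\cap C_e$, and then appeal to the half-open cube description of labels to match full-dimensional cells labelled $\ell(P)$ with $P\cap C_e$. The only slip is the phrase ``lies in the interior of a unique unit cube'': when $\dim P<n$ a full-dimensional cell of $X_P$ may sit inside a facet hyperplane $H_{i,e_i}$ of $C_e$, so one should say \emph{half-open} cube throughout (as the paper's discussion does), but this does not affect your conclusions.
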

\begin{proof}
All maximal cells in $X_P$ are of the form
\[P\cap H^{\ge}_{1,e'_1-1}\cap\cdots\cap H^{\ge}_{n,e'_n-1}\cap H^{\le}_{1,e'_1}\cap\cdots\cap H^{\le}_{n,e'_n}.\]
If $\ell(P)=x_1^{e_1}\cdots x_n^{e_n}$ then $P$ is in $H^{\le}_{1,e_1}\cap\cdots\cap H^{\le}_{n,e_n}$. As in the proof of Proposition~\ref{prop:diced} a cell has label $x_1^{e_1}\cdots x_n^{e_n}$ if and only if the interior of the cell is contained in $(e_1-1,e_1]\times\cdots\times(e_n-1,e_n]$. A cell with label $x_1^{e_1}\cdots x_n^{e_n}$ is contained in $P\cap H^{\ge}_{1,e_1-1}\cap\cdots\cap H^{\ge}_{n,e_n-1}$. If this intersection has the same dimension as $P$ then this cell is $\sigma_P$ and $P$ is sharp. To see this note that the cell cannot be contained in any of the hyperplanes $H_{i,e_{i}-1}$ as $P$ is not. If the intersection has lower dimension, then there is no cell $\sigma_P$ and $P$ is not sharp.
\end{proof}

\section{Subdivisions with sharp cells}\label{Main}

Sharp polytopes work well with respect to algebraic discrete Morse theory.

\begin{lemma}\label{lemma:main}
Let $P$ be a sharp polytope in $\mathbb{R}^n_{\ge0}$. There is a homogeneous acyclic matching of $O_P$ where $\sigma_P$ is the only critical cell.
\end{lemma}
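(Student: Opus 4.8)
The plan is to build the matching fiberwise, using the label map as the organizing poset map and Lemma~\ref{lemma:pre} to glue the pieces together. Define $f\colon O_P\to\mathbb{Z}^n$ by sending a cell $\sigma$ with label $x_1^{e_1}\cdots x_n^{e_n}$ to the exponent vector $(e_1,\dots,e_n)$, where $\mathbb{Z}^n$ carries the componentwise order. Since a face of a cell has label dividing the label of the cell, $f$ is a poset map. The point of this choice is that two cells in the same fiber $f^{-1}(e)$ share the label $x^e$, so any matching supported on a single fiber is automatically homogeneous, and a union of homogeneous matchings is homogeneous. Thus by Lemma~\ref{lemma:pre} it suffices to produce, for each $e$, an acyclic matching of $f^{-1}(e)$ that is perfect (leaves no critical cell) for every $e$ except the exponent vector $e^P$ of $\ell(P)$, for which the only critical cell should be $\sigma_P$.

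Next I would describe a single fiber geometrically. A cell has label $x^e$ exactly when its interior lies in the half-open cube $(e_1-1,e_1]\times\cdots\times(e_n-1,e_n]$, so $f^{-1}(e)$ consists of the faces of the cube-slice $Q_e:=P\cap([e_1-1,e_1]\times\cdots\times[e_n-1,e_n])$ that are \emph{full}, meaning they meet every top wall $H_{i,e_i}$, and that are not contained in $\partial P$. Because $P$ is diced, $Q_e$ is a lattice polytope whose vertices are corners of its cube, and fullness is preserved under passing to larger faces, so $f^{-1}(e)$ is an order filter in the face lattice of $Q_e$ whose maximum, when the fiber is nonempty, is $Q_e$ itself. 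For the distinguished fiber the geometric description of $\sigma_P$ gives $Q_{e^P}=P\cap\bigcap_i H^{\ge}_{i,e^P_i-1}=\sigma_P$, using that $P\subseteq\bigcap_i H^{\le}_{i,e^P_i}$; so the top fiber is precisely the poset of full interior faces of $\sigma_P$, with maximum $\sigma_P$.

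With this picture the matching is obtained by collapsing each fiber toward the lattice point $(e_1,\dots,e_n)$. Concretely I would order the coordinate directions and, sweeping the cube-slice toward that top corner, match each full interior face $\sigma$ with the facet or coface that differs from $\sigma$ by the first coordinate direction along which $\sigma$ can still be pushed without losing fullness or leaving $O_P$; this is the higher-dimensional analogue of the collapse in Example~\ref{ex:sq}, where a full-dimensional cell is paired with the facet of equal label lying toward $\sigma_P$. The matched pairs always share a label, so homogeneity is free, and acyclicity would be verified either by exhibiting a linear extension compatible with every reversed edge or by a second, nested application of Lemma~\ref{lemma:pre} inside each fiber.

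The main obstacle is to make this sweep precise and to prove that it is perfect off the top fiber while leaving exactly $\sigma_P$ critical. Two features must cooperate. First, the restriction to $O_P$ deletes the faces of $Q_e$ lying in $\partial P$, and this is what repairs the parity that would otherwise block the matching: a cube-slice interior on every side contributes a full-face filter isomorphic to a Boolean lattice, which has an even number of elements and hence admits no acyclic matching with a single critical cell. Second, sharpness guarantees that $Q_{e^P}=\sigma_P$ is full-dimensional, so the distinguished fiber genuinely has $\sigma_P$ as its top face; in every other fiber one must instead show that the sweep pairs the maximal face inward and leaves nothing critical. Pinning down the pairing rule so that pushing toward the top corner never destroys fullness in the remaining coordinates nor exits $O_P$, and checking that it meets every cell exactly once, is the delicate heart of the argument.
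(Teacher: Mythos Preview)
Your setup is sound: the label map $f$ is a poset map, homogeneity is automatic on each fiber, and your computation that $f^{-1}(e^P)=\{\sigma_P\}$ is correct (any proper face of $\sigma_P$ not in $\partial P$ must lie in some bottom wall $H_{i,e^P_i-1}$ and hence misses the parallel top wall $H_{i,e^P_i}$, so it is not full). Likewise your observation that a nonempty non-top fiber forces some $0<e_i<e^P_i$ is right, since $e_i=0<e^P_i$ would push the cell into the supporting face $P\cap H_{i,0}\subseteq\partial P$.

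The gap is exactly where you say it is, and it is real rather than routine. The obvious candidate for the sweep---pick such an $i$ and toggle ``contained in $H_{i,e_i}$'' versus ``thick in direction $i$''---does not obviously define a matching on $f^{-1}(e)$. If $\sigma\not\subseteq H_{i,e_i}$, the slice $\sigma\cap H_{i,e_i}$ need not be full: a cell can touch $H_{i,e_i}$ and $H_{j,e_j}$ at disjoint faces, so that $\sigma\cap H_{i,e_i}\cap H_{j,e_j}=\varnothing$. Conversely, if $\sigma\subseteq H_{i,e_i}$ there can be several cofaces of $\sigma$ inside $Q_e$ extending to the $H^{\le}_{i,e_i}$ side, and you have not singled one out. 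So neither direction of the toggle is yet well-defined on the fiber, and the collapsibility of $f^{-1}(e)$ remains to be proved. Your parenthetical about Boolean lattices is also off: a Boolean lattice of rank $\ge 1$ has a perfect acyclic matching (zero critical cells), so that paragraph does not explain the role of $\partial P$.

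For comparison, the paper does not work fiber-by-fiber at all. It argues by induction on the number of maximal cells in $O_P$ (and on $\dim P$, $n$): take a maximal cell $\tau\neq\sigma_P$ sharing a facet $\omega$ with $\sigma_P$, let $H$ be the affine span of $\omega$, and split $P$ into $P_{\ge}\supseteq\sigma_P$, $P_{\le}\supseteq\tau$, and $P\cap H$. All three pieces are shown to be sharp with distinguished cells $\sigma_P$, $\tau$, $\omega$, and each has strictly fewer maximal cells or lower dimension, so induction yields matchings on $O_{P_{\ge}}$, $O_{P_{\le}}$, $O_{P\cap H}$; these are glued by Lemma~\ref{lemma:pre} (via the poset $\{P_{\ge},P_{\le},P\cap H\}$ ordered by inclusion), and the single extra pair $\{\tau,\omega\}$ is added to kill the two surplus critical cells. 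Sharpness is used at every inductive step, not just at the top, which is what makes the recursion close---whereas in your scheme sharpness is invoked only once, and the burden shifts to an unproved structural statement about every non-top fiber.
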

\begin{proof}
The argument is by induction on the number of maximal cells in $O_P$, $\dim P$ and $n$. These are the base cases of the induction:
\begin{itemize}
\item[-] If $O_P$ has a single maximal cell then it is $\sigma_P$, as $\sigma_P$ is always maximal by definition. There are no other maximal cells beside $\sigma_P$ if and only if $\sigma_P=P$. In this case $O_P=\{\sigma_P\}$, and the empty matching leaves only $\sigma_P$ as critical.

\item[-] If $\dim P=0$ then $\sigma_P=P$.

\item[-] If $n=1$, then $\dim P=0$ or $P$ is a line segment. Let $P$ be the line segment $[a,b]$. Any cell $[i-1,i]$ with $i\neq b$ is matched to its endpoint $i$. The only critical cell is $[b-1,b]=\sigma_P$.
\end{itemize}

From here on, it can be assumed that $|O_P|>1,\dim P>0$, and $n>1.$ By induction on $n$, it can also be assumed that $P$ is not contained in any hyperplane $H_{i,j}$.

The maximal elements of $O_P$ are polytopes of the same dimension as $P$. As the subdivision $X_P$ is not trivial, every maximal element is neighboring another maximal element. In particular, there is a maximal element $\tau$ in $O_P\setminus \{\sigma_P\}$ so that $\omega=\tau\cap\sigma_P$ is a facet of $\sigma_P$ and $\tau$.

Let $H_{i_1,j_1},\ldots,H_{i_k,j_k}$ be the hyperplanes of the form $H_{i,j}$ containing $\omega$. As $P$ is not in any hyperplane $H_{i,j}$, the space $H=H_{i_1,j_1}\cap\cdots\cap H_{i_k,j_k}$ is the supporting subspace of $\omega$. Note that while $\omega=\sigma_P\cap H_{i_s,j_s}=\tau\cap H_{i_s,j_s}=\sigma_P\cap H=\tau\cap H$ for any single $i_s\in S=\{i_1,\ldots,i_k\}$ it could happen that $P$ is contained in a hyperplane so that $\omega$ is contained in multiple hyperplanes $H_{i,j}$.

The space $H$ splits $P$ into two polytopes $P_{\ge}$ and $P_{\le}$. The polytope $P_{\ge}$ contains $\sigma_P$ and $P_{\le}$ contains $\tau$. In fact
\[P_{\ge}=P\cap H^{\ge}_{i_1,j_1}\cap\cdots\cap H^{\ge}_{i_k,j_k}\]
and
\[P_{\le}=P\cap H^{\le}_{i_1,j_1}\cap\cdots\cap H^{\le}_{i_k,j_k}\]
as otherwise one of the variables would have a higher degree in $\ell(\tau)$ than in $\ell(\sigma_P)$.

Let $\ell(P)=x_1^{e_1}\cdots x_n^{e_n}$ and recall that $H=H_{i_1,j_1}\cap\cdots\cap H_{i_k,j_k}$ and $S=\{i_1,\ldots,i_k\}$. Now $j_s=e_{i_s}-1$ for all $s\in[k]$ and equivalently $H_{i_s,j_s}=H_{i_s,e_{i_s}-1}$ for all $i_s\in S$.

The next step is to show that $P_{\ge},P_{\le},$ and $P\cap H$ are all sharp.

By construction $\sigma_P\subseteq P_{\ge}\subseteq P$ and it follows that $P_{\ge}$ is sharp with $\sigma_{P_{\ge}}=\sigma_P$.

The polytope $P_{\le}$ is contained in $(\cap_{i\in S}  H^{\le}_{i,e_i-1})\cap(\cap_{i\in [n]\setminus S} H^{\le}_{i,e_i})$ and in fact the cell $\tau$ can be described explicitly as
\[\tau=P_{\le}\cap (\cap_{i\in S}  H^{\ge}_{i,e_i-2})\cap(\cap_{i\in [n]\setminus S} H^{\ge}_{i,e_i-1}).\]
In particular $P_{\le}$ is sharp with $\tau=\sigma_{P_{\le}}$.

The polytope $P\cap H$ is also contained in $(\cap_{i\in S}  H^{\le}_{i,e_i-1})\cap(\cap_{i\in [n]\setminus S} H^{\le}_{i,e_i})$ and
\[\omega=P\cap H\cap (\cap_{i\in S}  H^{\ge}_{i,e_i-2})\cap(\cap_{i\in [n]\setminus S} H^{\ge}_{i,e_i-1})\]
is full dimensional in $P\cap H$. The polytope $P\cap H$ is sharp with $\sigma_{P\cap H}=\omega$ and $\ell(\omega)=\ell(\tau)=\prod_{i\in S}x_i^{e_i-1}\prod_{i\in[n]\setminus S}x_i^{e_i}$.

The posets $O_{P_{\ge}}$ and $O_{P_{\le}}$ has fewer maximal elements than $O_P$, and $P\cap H$ has lower dimension than $P$. By induction there are acyclic matchings $M_{P_{\ge}},M_{P_{\le}}$ and $M_{P\cap H}$ of $O_{P_{\ge}},O_{P_{\le}}$ and $O_{P\cap H}$ respectively, leaving only $\sigma_{P},\tau,$ and $\omega$ as critical. The matching $M_{P_\ge}\cup M_{P_{\le}}\cup M_{P\cap H}$ is acyclic by Lemma~\ref{lemma:pre}. The poset map used is constructed as follows. Let $Q$ be the polytopes $P_\ge,P_{\le}$ and $P\cap H$ ordered by containment, and the poset map sends a cell $\sigma$ to the smallest polytope in $Q$ containing $\sigma$. The preimage of $P_\ge,P_{\le}$ and $P\cap H$ are $O_{P_{\ge}},O_{P_{\le}}$ and $O_{P\cap H}$, respectively. Adding $\tau\omega$ to the matching does not break acyclicity. To see this consider the poset map to the labels ordered by divisibility. There is no other maximal cell with the same label as $\tau$, and then there can be no cycle using the reversed edge between $\omega$ and $\tau$. Now $M_{P_\ge}\cup M_{P_{\le}}\cup M_{P\cap H}\cup \{\tau\omega\}$ is a homogenous acyclic matching leaving only $\sigma_P$ as critical.
\end{proof}

\begin{theorem}\label{thm:maint}
If $X$ is any subdivision of a diced polytope $P$ into totally sharp polytopes, then $X$ together with $\ell$ is a cellular resolution of $I_P$.
\end{theorem}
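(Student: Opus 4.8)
The plan is to factor the subdivision $X$ through the finer, grid-based subdivision and then collapse away the extra cells with discrete Morse theory. First I would form the common refinement $W=\bigcup_Q X_Q$, where $Q$ ranges over the maximal cells of $X$ and $X_Q$ is the subdivision of $Q$ by the integer hyperplanes $H_{i,j}$. Because each $Q$ is totally sharp, hence diced, every $X_Q$ has only lattice vertices, so all cells of $W$ are lattice polytopes; moreover each cell of $W$ lies in a single unit cube $[c_1-1,c_1]\times\cdots\times[c_n-1,c_n]$. These are exactly the two features used in the proof of Proposition~\ref{prop:diced}, so the same half-open-cube argument shows that the geometric realization of $\{\sigma\in W\mid \ell(\sigma)\text{ divides }m\}$ is the convex set $P\cap H^{\le}_{1,a_1}\cap\cdots\cap H^{\le}_{n,a_n}$ for $m=x_1^{a_1}\cdots x_n^{a_n}$. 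Hence $W$ together with $\ell$ is a cellular resolution of $I_P$.

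Next I would build a single global homogeneous acyclic matching on $W$ that leaves exactly one critical cell for each face of $X$. For a cell $F$ of $X$, the cells of $W$ whose relative interior lies in the relative interior of $F$ are precisely the interior cells $O_F$ of $X_F$, since restricting the grid to $F$ gives $W|_F=X_F$. As $F$ is a face of a totally sharp polytope it is itself sharp, so Lemma~\ref{lemma:main} supplies a homogeneous acyclic matching $M_F$ of $O_F$ with unique critical cell $\sigma_F$. The carrier map $f\colon W\to X$ sending a cell of $W$ to the smallest face of $X$ containing it is a poset map with $f^{-1}(F)=O_F$, so by Lemma~\ref{lemma:pre} the union $M=\bigcup_{F\in X}M_F$ is acyclic; it is homogeneous because each $M_F$ is. Its critical cells are exactly the $\sigma_F$, one per face $F$ of $X$, with $\dim\sigma_F=\dim F$ and, by sharpness, $\ell(\sigma_F)=\ell(F)$.

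Applying Theorem~\ref{thm:admt} then yields a Morse complex $\tilde{X}$, a cellular resolution of $I_P$, whose cells are in dimension- and label-preserving bijection with the faces of $X$. The final step, and the one I expect to be the main obstacle, is to identify $\tilde{X}$ with $X$ as labeled cell complexes. By Proposition~\ref{prop:strong} this amounts to controlling the gradient paths of $M$ between the critical cells: I would show that there is a gradient path between $\sigma_F$ and $\sigma_{F'}$ exactly when $F$ is a face of $F'$ in $X$, and that when $F$ is a facet of $F'$ this path is unique. The difficulty is that a gradient path can leave a stratum $O_{F'}$ through a downward facet step and wander through lower strata before reaching $\sigma_F$, so the uniqueness has to be read off from the inductive construction of the matchings in Lemma~\ref{lemma:main}. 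Granting this, the boundary maps of $\tilde{X}$ agree with those of $X$ up to sign, $\tilde{X}$ is the regular CW complex $X$ anticipated in the remark following Proposition~\ref{prop:strong}, and the cellular resolution structure transports to $X$, proving the theorem.
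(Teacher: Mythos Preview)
Your proposal is correct and follows essentially the same route as the paper: refine $X$ by the integer hyperplanes to obtain a complex $X'$ (your $W$), verify it is a cellular resolution via the convexity argument of Proposition~\ref{prop:diced}, apply Lemma~\ref{lemma:main} on each $O_F$, glue the matchings with Lemma~\ref{lemma:pre} using the carrier map to the face poset of $X$, and then identify the Morse complex with $X$ through the gradient-path analysis of Proposition~\ref{prop:strong}. The paper handles the final identification with the same inductive observation you anticipate---that for a facet pair the last step of the gradient path must come from the interior of the carrying cell---so the obstacle you flag is genuine but is resolved exactly along the lines you sketch.
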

\begin{proof}
Subdivide $X$ further by all hyperplanes $H_{i,j}$ to obtain a complex $X'$. As all cells in $X$ are diced it follows that all cells in $X'$ are lattice polytopes. Arguing as in the proof of Proposition~\ref{prop:diced}, the geometric realization of 
\[\{\sigma\in X'\mid \ell(\sigma)\textrm{ divides }x_1^{e_1}\cdots x_n^{e_n}\}\]
is $P\cap H^{\le}_{1,e_1}\cap\cdots\cap H^{\le}_{n,e_n}$, and the complex give a cellular resolution of $I_P$.

For each cell $\sigma$ in $X$, Lemma~\ref{lemma:main} provides a matching of the cells in $X'$ contained in $\sigma$. Lemma~\ref{lemma:pre} shows that the matchings glue together to a matching for all of $X'$ where the critical cells are in bijection with the cells of $X$.

The bijection preserve both dimension and label. To show that the Morse complex in fact is isomorphic to $X$ a slight strengthening of Theorem~\ref{thm:admt} is needed. Proposition~\ref{prop:strong} ensures that a cell $\sigma$ is on the boundary of a cell $\tau$ in the Morse complex if and only if the corresponding faces in the polytope $\sigma'$ and $\tau'$ satisfy $\sigma'\subset\tau'$. It is enough to consider the case when $\sigma'$ is a facet of $\tau'$, in this case the gradient path can be constructed inductively by observing that the last step has to come from the interior of the cell in $X$ containing $\sigma$. This inductive argument also shows that the gradient path is unique. Note that the gradient paths are not unique if looking at faces of higher codimension.

Proposition 7.7 in~\cite{BW} describes the boundary maps of the Morse complex in terms of sums over gradient paths between a cell and a codimension one cell on its boundary. As these gradient paths are unique it follows that the Morse complex is regular in the sense of CW-complexes and isomorphic to $X$. Essentially this also follows from inductively using the proof of Theorem 12.1 in~\cite{F}.
\end{proof}

The Minkowski sum of standard simplices $P_1,\cdots,P_m$ is the polytope $P=P_1+\cdots+P_m=\{p_1+\cdots+p_m\mid p_i\in P_i\}$. A fine mixed subdivision of the Minkowski sum $P$ is a subdivision of $P$ into polytopes $B_1+\cdots+B_m$ where each $B_i$ is a simplex and a facet of $P_i$ and furthermore the $B_i$ are in affinely independent subspaces.

\begin{corollary}\label{thm:fine}
Fine mixed subdivisions of Minkowski sums of standard simplices are minimal cellular resolutions.
\end{corollary}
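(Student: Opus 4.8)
The plan is to deduce the corollary from Theorem~\ref{thm:maint} by checking its two hypotheses for a Minkowski sum of standard simplices $P=\Delta_{S_1}+\cdots+\Delta_{S_m}$, where $\Delta_{S}=\mathrm{conv}\{\mathbf e_i\mid i\in S\}$, together with a fine mixed subdivision $X$ of it, and then to verify minimality separately. Concretely I must show (i) that $P$ is diced, (ii) that every cell of $X$ is totally sharp, and (iii) that the resulting cellular resolution is minimal. Parts (i) and (ii) are exactly the two assertions called ``immediate'' in the introduction, and the point of the proof is to make them precise.

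For (i), the key observation is that every edge of a Minkowski sum of standard simplices has primitive direction $\mathbf e_a-\mathbf e_b$ for some indices $a,b$. Along such an edge the $i$-th coordinate changes with slope in $\{-1,0,1\}$, so starting from a lattice endpoint it meets a hyperplane $H_{i,j}$ only at lattice points; thus cutting $P$ by the hyperplanes $H_{i,j}$ creates no new non-lattice vertices on edges. To handle higher-dimensional faces I would argue as in the proof of Proposition~\ref{prop:uno}: fixing a coordinate $x_i$ to an integer slices $P$ into a translate of a Minkowski sum of standard simplices of one lower dimension, and induction on $n$ gives $P\cap\mathbb Z^n=V(X_P)$, the inclusion $P\cap\mathbb Z^n\subseteq V(X_P)$ being automatic. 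Since each face of $P$, and each cell $B_1+\cdots+B_m$ of $X$ with $B_j=\mathrm{conv}\{\mathbf e_i\mid i\in T_j\}$ and $T_j\subseteq S_j$, is again a Minkowski sum of standard simplices, the same argument shows every such cell and each of its faces is diced.

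For (ii), fix a cell $C=B_1+\cdots+B_m$ of $X$ as above. A direct computation gives $\ell(C)=\prod_i x_i^{e_i}$ with $e_i=|\{j\mid i\in T_j\}|$: coordinate $i$ is maximized over $C\cap\mathbb Z^n$ by choosing the vertex $\mathbf e_i$ in every summand $B_j$ with $i\in T_j$ simultaneously. The fineness hypothesis, namely that the $B_j$ lie in affinely independent subspaces and $\dim C=\sum_j\dim B_j$, is exactly what lets these choices be made independently, so the Minkowski sum of the resulting ``top'' subsimplices is a full-dimensional cell $\sigma_C\in X_C$ with $\ell(\sigma_C)=\ell(C)$; hence $C$ is sharp. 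Because every face of $C$ is again a Minkowski sum of subsimplices of the $B_j$ in affinely independent subspaces, the same reasoning applies to it, so $C$ is totally sharp and Theorem~\ref{thm:maint} applies, giving that $X$ together with $\ell$ is a cellular resolution of $I_P$. I expect this step, and in particular the verification that affine independence forces the coordinatewise maxima to be realized simultaneously by a full-dimensional cell, to be the main obstacle.

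Minimality (iii) follows from the degree count implicit in (ii). For any cell $C$ of $X$ one has $\deg\ell(C)=\sum_i e_i=\sum_j|T_j|=\sum_j(\dim B_j+1)=\dim C+m$. Thus the total degree of the label of a cell is determined by its dimension, and if $\sigma\subsetneq\tau$ in $X$ then $\dim\sigma<\dim\tau$, forcing $\deg\ell(\sigma)<\deg\ell(\tau)$ and in particular $\ell(\sigma)\ne\ell(\tau)$. Hence no cell is properly contained in a cell with the same label, so the resolution is minimal.
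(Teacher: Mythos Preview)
Your overall architecture matches the paper exactly: apply Theorem~\ref{thm:maint} after checking that the cells of a fine mixed subdivision are diced and totally sharp, and then deduce minimality from the degree identity $\deg\ell(C)=\dim C+m$. Your minimality argument~(iii) is essentially the paper's, and it is correct.

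The differences are in how (i) and (ii) are verified. The paper does not argue directly: it cites Corollary~4.9 of~\cite{OY} to conclude that each fine mixed cell is cut out by a totally unimodular system, and then invokes Proposition~\ref{prop:uno} to get dicedness; for sharpness it cites Lemma~2.6 of~\cite{OY}, which says that a fine mixed cell $C$ with label $x_1^{e_1}\cdots x_n^{e_n}$ contains the simplex $(e_1-1,\dots,e_n-1)+\mathrm{conv}\{\mathbf e_1,\dots,\mathbf e_n\}$, and that simplex is exactly a full-dimensional witness for $\sigma_C$.

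Your direct route has two genuine gaps. In (i), the inductive step rests on the claim that an integer coordinate slice of a Minkowski sum of standard simplices is again, up to translation, a Minkowski sum of standard simplices. This is plausible but not obvious, and you do not justify it; without it the induction does not close. In (ii), the sentence ``the Minkowski sum of the resulting `top' subsimplices is a full-dimensional cell $\sigma_C$'' is not correct as stated: already in Example~\ref{ex:sq}, with $C=\Delta_{\{1,2\}}+\Delta_{\{1,3\}}$, the cell $\sigma_C$ is a triangle, not a Minkowski sum of faces of the two segments. What is actually needed is that $C\cap\bigcap_i H^{\ge}_{i,e_i-1}$ has full dimension, and your argument does not establish this; you yourself flag it as the main obstacle. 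The paper resolves both points by importing the requisite facts from~\cite{OY}, so if you want a self-contained proof you will have to reproduce those arguments (total unimodularity of the defining system, and the containment of the translated standard simplex) rather than the heuristics you currently have.
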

\begin{proof}
To show that it is a resolution it is enough to check that the cells are totally sharp.

Corollary 4.9 in~\cite{OY} says that the matrices defining fine mixed cells are totally unimodular, in particular the cells are diced by Proposition~\ref{prop:uno}.

Let $\Delta_1,\ldots,\Delta_N$ be the simplices so that $\Delta_1+\cdots+\Delta_N$ is a fine mixed cell. The label of the cell $\Delta_1+\cdots+\Delta_N$ is $x_1^{e_1}\cdots x_n^{e_n}$ where $e_i$ is the number of simplices containing the standard basis vector $\mathbf{e}_i$. It can be assumed that $e_i>0$ for all $i$. Lemma 2.6 in~\cite{OY} says that a the cell contains the simplex $(e_1-1,\ldots,e_n-1)+\mathrm{conv}\{\mathbf{e}_1,\ldots,\mathbf{e}_n\}$, this shows that it is sharp.

Minimality follow from the fact that the degree of $\ell(\Delta_1+\cdots+\Delta_N)$ is $|V(\Delta_1)|+\cdots+|V(\Delta_N)|$ and all facets of $P$ have label of lower degree.
\end{proof}

A three-dimensional fine mixed cell is either a simplex, a triangular prism, or a cube. To illustrate the results, an example of each type is examined in more detail.

\begin{example}
If the cell $P$ is a simplex, then $P=\sigma_P$ and no matching is needed.
\end{example}

\begin{example}\label{ex:prism}
Let $P$ be the triangular prism $\mathrm{conv}\{\mathbf{e}_1,\mathbf{e}_2,\mathbf{e}_3\}+\mathrm{conv}\{\mathbf{e}_1+\mathbf{e}_4\}$. The complex $X_P$ is depicted in Figure~\ref{fig:prism}. The simplex is the cell $\sigma_P$. The interior triangle is matched to the pyramid. The subdivided cells on the boundary are isomorphic to the  complex in Example~\ref{ex:sq}, which is depicted in Figure~\ref{fig:subdiv}.
\end{example}

\begin{figure}
\center\includegraphics[width=\textwidth]{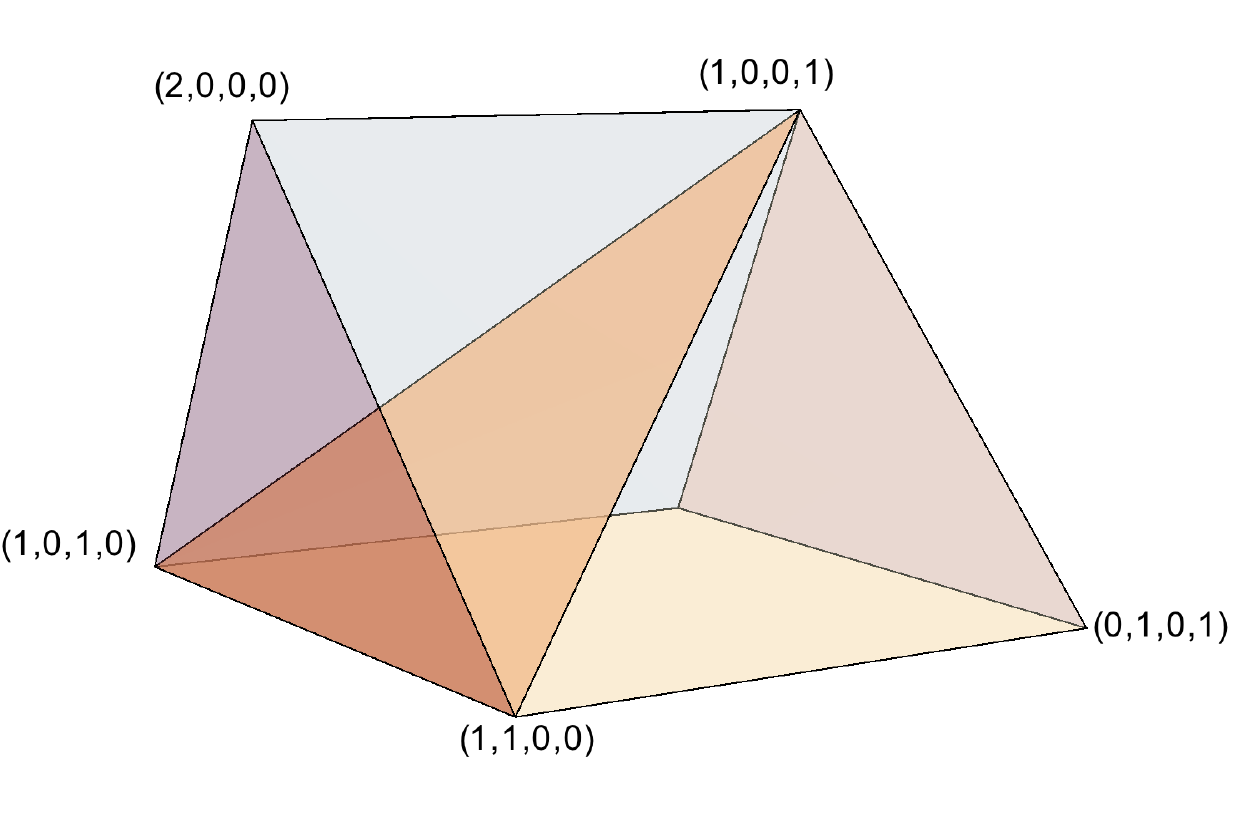}
\caption{The subdivided triangular prism in Example~\ref{ex:prism}. The unlabeled vertex is $(0,0,1,1)$. The hyperplane cutting the prism is $H_{1,1}$.}\label{fig:prism}
\end{figure}

\begin{example}\label{ex:cube}
Let $P$ be the cube $\mathrm{conv}\{\mathbf{e}_1,\mathbf{e}_2\}+\mathrm{conv}\{\mathbf{e}_1,\mathbf{e}_3\}+\mathrm{conv}\{\mathbf{e}_3,\mathbf{e}_4\}$. The complex $X_P$ is depicted in Figure~\ref{fig:cube}. The rightmost simplex is the cell $\sigma_P$. The interior triangles on the boundary of $\sigma_P$ are matched to the pyramids. Any of the remaining two interior triangles can be matched to the leftmost simplex. The line from $(1,0,1,1)$ to $(1,1,1,0)$ can be matched to the last remaining interior triangle. As in Example~\ref{ex:prism} the subdivided cells on the boundary are handled like the complex in Figure~\ref{fig:subdiv}.
\end{example}

\begin{figure}
\center\includegraphics[width=\textwidth]{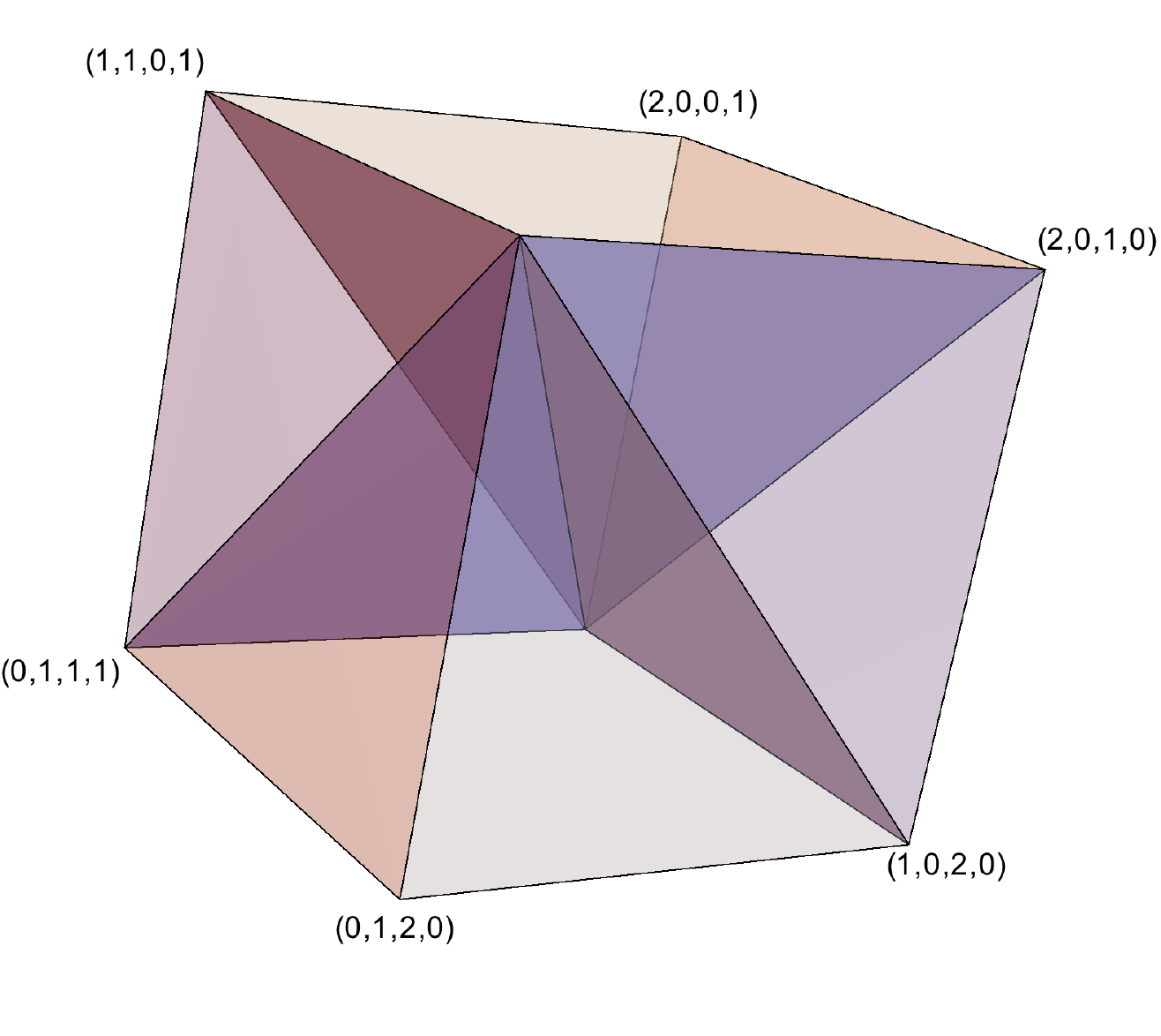}
\caption{The subdivided cube in Example~\ref{ex:cube}. The unlabeled vertex at the back is $(1,0,1,1)$ and the one in front is $(1,1,1,0)$. The hyperplanes cutting the cube are $H_{1,1}$ and $H_{3,1}$.}\label{fig:cube}
\end{figure}

There are many more resolutions of this class.

\begin{proposition}
All zero one polytopes are diced and totally sharp.
\end{proposition}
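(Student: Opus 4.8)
The plan is to show that any $0/1$ polytope $P$ (a lattice polytope all of whose vertices lie in $\{0,1\}^n$) is diced, and that every face of $P$ is sharp. The key structural observation is that for a $0/1$ polytope the only relevant hyperplanes $H_{i,j}$ are $H_{i,0}$ and $H_{i,1}$, since every vertex coordinate is $0$ or $1$. Consequently $P$ already lies in the unit cube $H^{\le}_{1,1}\cap\cdots\cap H^{\le}_{n,1}\cap H^{\ge}_{1,0}\cap\cdots\cap H^{\ge}_{n,0}$, and cutting by the hyperplanes $H_{i,j}$ only produces cuts along the coordinate hyperplanes $H_{i,0}$ and $H_{i,1}$, none of which pass through the interior of the unit cube. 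I expect this to force $X_P$ to be the trivial subdivision.

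First I would verify dicedness. I would argue that cutting $P$ by all hyperplanes $H_{i,j}$ introduces no new vertices, so that $V(X_P)=V(P)$; since $V(P)\subseteq\{0,1\}^n\subseteq\mathbb{Z}^n$ and conversely every lattice point of a $0/1$ polytope that arises is already a vertex, we get $P\cap\mathbb{Z}^n=V(X_P)$. The substantive point is that the hyperplanes $H_{i,0},H_{i,1}$ are supporting hyperplanes of the ambient unit cube and hence meet $P$ only in its faces; they cannot slice through the relative interior of $P$, so $X_P$ consists of $P$ itself together with its faces. This establishes $P\cap\mathbb{Z}^n = V(X_P)$ directly and shows $P$ is diced.

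Next I would check sharpness. Writing $\ell(P)=x_1^{e_1}\cdots x_n^{e_n}$, each $e_i$ is the maximum value of coordinate $i$ over lattice points of $P$, hence $e_i\in\{0,1\}$. Since $X_P$ is the trivial subdivision, the only full-dimensional cell is $P=\sigma_P$ itself, and its label equals $\ell(P)$ by definition; thus $P$ is sharp with $\sigma_P=P$. The same argument applies verbatim to every face $F$ of $P$: each face of a $0/1$ polytope is again a $0/1$ polytope, so $F$ is diced with $X_F$ trivial, hence sharp with $\sigma_F=F$. Therefore all faces of $P$ are sharp, meaning $P$ is totally sharp.

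The only point requiring care — what I would call the main obstacle — is the claim that no hyperplane $H_{i,j}$ slices through the interior of $P$, i.e.\ that $X_P$ really is trivial. One must rule out the possibility that some $H_{i,j}$ with $j\in\{0,1\}$ cuts $P$ into two full-dimensional pieces. This is handled by noting that $H_{i,0}$ and $H_{i,1}$ bound the unit cube containing $P$, so $P$ lies entirely in $H^{\ge}_{i,0}\cap H^{\le}_{i,1}$; a hyperplane bounding a convex region meets that region only along its boundary, never in the relative interior. Hence each $H_{i,j}$ intersected with $P$ is a face of $P$, and the induced subdivision adds no interior walls. Once this is granted, both the diced and totally sharp conclusions follow immediately, and Theorem~\ref{thm:maint} then applies to any subdivision of such a polytope into totally sharp cells.
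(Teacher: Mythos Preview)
Your proof is correct and follows essentially the same line as the paper's: observe that $P\subseteq[0,1]^n$, so the cutting hyperplanes $H_{i,0}$ and $H_{i,1}$ are supporting and $X_P$ is the trivial subdivision, whence $P$ is diced and $\sigma_F=F$ for every face $F$. The paper states this in two sentences without justification; you have simply unpacked the details, including the (true) fact that every lattice point of a $0/1$ polytope is already a vertex.
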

\begin{proof}
If $P$ is a zero one polytope then the subdivision $X_P$ is trivial and $P$ is diced. For all faces $F$ it also hold that $\sigma_F=F$ and $P$ is totally sharp.
\end{proof}

The resolutions coming from the zero one polytopes are the hull resolutions of square-free ideals. These resolutions can sometimes be made minimal using discrete Morse theory but in general the Morse complex is no longer a polytopes, and sometimes it is impossible to make the resolution minimal using only discrete Morse theory \cite{N}.

Another polytopal subdivision where the cells are totally sharp occur in~\cite{EN}, where the resolution resolves a power of the edge ideal of a path. It is interesting to note that if a not subdivided diced polytope support a minimal resolution then it has to be sharp.
 
\begin{proposition}
Let $P$ be a diced polytope. If the not subdivided polytope $P$ together with $\ell$ supports a minimal cellular resolution of $I_P$ then $P$ is sharp.
\end{proposition}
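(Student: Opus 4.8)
The plan is to compare the assumed minimal cellular resolution carried by the unsubdivided polytope $P$ with the cellular resolution carried by the subdivision $X_P$, which always resolves $I_P$ by Proposition~\ref{prop:diced}. The key observation is that the existence of a full-dimensional cell of $X_P$ with label $\ell(P)$ is \emph{exactly} the definition of sharpness, so it suffices to produce such a cell. I would extract it from a comparison of multigraded Betti numbers.

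First I would record what minimality of the resolution supported by $P$ gives. Writing $d=\dim P$, the faces of $P$ have dimensions $0,\dots,d$ and there is a unique top face, namely $P$ itself, whose label is $\ell(P)$. In a \emph{minimal} cellular resolution of $I_P$ the $i$-dimensional faces contribute exactly the free generators in homological degree $i$, so the multigraded Betti number in homological degree $d$ and multidegree $\ell(P)$ satisfies $\beta_{d,\ell(P)}(I_P)=1$. Here minimality is essential: without it the top cell could be cancelled and this Betti number could vanish, which is precisely what happens for the non-sharp line segment of Example~\ref{ex:lines}.

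Next I would use the subdivided resolution. Betti numbers depend only on $I_P$, not on the chosen resolution, and any minimal free resolution is a direct summand of any free resolution of the same module. Applied to the cellular resolution $X_P$, with the same homological grading (an $i$-cell sitting in homological degree $i$, since the vertices generate $I_P$), this yields $\beta_{i,m}(I_P)\le \#\{\,\sigma\in X_P:\dim\sigma=i,\ \ell(\sigma)=m\,\}$ for every $i$ and every monomial $m$. Taking $i=d$ and $m=\ell(P)$ and combining with the previous paragraph gives $1=\beta_{d,\ell(P)}(I_P)\le \#\{\,\sigma\in X_P:\dim\sigma=d,\ \ell(\sigma)=\ell(P)\,\}$, so $X_P$ contains a $d$-dimensional cell $\sigma$ with $\ell(\sigma)=\ell(P)$. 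By definition this $\sigma$ witnesses that $P$ is sharp, with $\sigma_P=\sigma$.

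The one step that needs care—and where I expect the main work to sit—is the Betti-number comparison: making precise that both complexes resolve $I_P$ with the same homological conventions, and invoking the standard fact that the minimal resolution splits off of any resolution so that cell counts bound Betti numbers from above. Once that inequality is in hand the geometric conclusion is immediate from the definition of sharp, and no further analysis of $X_P$ (for instance via the criterion $\dim P\cap H^{\ge}_{1,e_1-1}\cap\cdots\cap H^{\ge}_{n,e_n-1}=\dim P$) is required, though that criterion could serve as an alternative finish.
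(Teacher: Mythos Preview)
Your proof is correct and follows essentially the same approach as the paper: compare the minimal resolution supported by $P$ with the (not necessarily minimal) resolution supported by $X_P$, and conclude that $X_P$ must contain a $\dim P$--dimensional cell with label $\ell(P)$, which is the definition of sharpness. The paper compresses the Betti-number comparison into the single phrase ``then $X_P$ also needs such a cell,'' whereas you spell out the underlying inequality $\beta_{i,m}(I_P)\le\#\{\sigma\in X_P:\dim\sigma=i,\ \ell(\sigma)=m\}$ explicitly.
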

\begin{proof}
The subdivision $X_P$ also supports a resolution. As $P$ supports a minimal resolution and has a cell of dimension $\dim P$ with label $\ell(P)$ then $X_P$ also needs such a cell, this cell has to be $\sigma_P$ showing the sharpness of $P$.
\end{proof}

\section*{Acknowledgements}

Thanks to Alex Engstr\"om and Anton Dochtermann for helpful comments on an early version of the manuscript.

\end{document}